\documentclass{amsart}
\usepackage{amsfonts,amssymb,amscd,amsmath,enumerate,verbatim,calc}

\newcommand{\CM}{Cohen-Macaulay}

\newcommand{\wrt}{with respect to}
\newcommand{\B}{\mathcal{B} }

\newcommand{\n}{\mathfrak{n} }
\newcommand{\m}{\mathfrak{m} }
\newcommand{\M}{\mathfrak{M} }

\newcommand{\R}{\mathcal{R} }
\newcommand{\Z}{\mathbb{Z} }
\newcommand{\Sc}{\mathcal{R} }
\newcommand{\rt}{\rightarrow}
\newcommand{\xar}{\longrightarrow}
\newcommand{\ov}{\overline}

\newcommand{\wt}{\widetilde }

\newcommand{\depth}{\operatorname{depth}}
\newcommand{\coker}{\operatorname{coker}}
\newcommand{\ann}{\operatorname{ann}}

\theoremstyle{plain}

\newtheorem{theorem}{Theorem}[section]
\newtheorem{corollary}[theorem]{Corollary}

\newtheorem{proposition}[theorem]{Proposition}

\theoremstyle{definition}

\theoremstyle{remark}

\begin{document}

\title{On Coefficient  ideals}
\author{Tony~J.~Puthenpurakal}
\date{\today}
\address{Department of Mathematics, IIT Bombay, Powai, Mumbai 400 076}

\email{tputhen@math.iitb.ac.in}
\subjclass{Primary  13A30,  13D45 ; Secondary 13H10, 13H15}
\keywords{multiplicity,  reduction, Hilbert polynomial, associated graded rings, coefficient ideals}
 \begin{abstract}
Let $(A,\mathfrak{m})$ be a  Cohen-Macaulay local ring  of dimension $d \geq 2$ with infinite residue field and let $I$ be an $\m$primary ideal. Let
 For $0 \leq i \leq d$ let $I_i$ be the $i^{th}$-coefficient ideal of $I$.  Also let $\widetilde{I} = I_d$ denote the Ratliff-Rush closure of $A$. Let $G = G_I(A)$ be the associated graded ring of $I$.
We show that if $\dim H^j_{G_+}(G)^\vee \leq j -1$ for $1 \leq j \leq i \leq d-1$ then
$(I^n)_{d-i}  = \widetilde{I^n}$ for all $n \geq 1$. In particular if $G$ is generalized  Cohen-Macaulay  then $(I^n)_1 = \widetilde{I^n}$ for all $n \geq 1$.
As a consequence we get that if $A$ is an analytically unramified domain with  $G$  generalized  Cohen-Macaulay, then  the $S_2$-ification of the Rees algebra $ A[It]$ is
$\bigoplus_{n \geq 0} \widetilde{I^n}$.
\end{abstract}
 \maketitle
\section{introduction}
Let $(A,\m)$ be a Noetherian local ring of dimension $d \geq 1$ and let $I$ be an $\m$-primary ideal.
If $M$ is an $A$-module let $\lambda(M)$ denote its length.
 Let $P_I(z) \in \mathbb{Q}[z]$ be the Hilbert-Samuel polynomial of $I$; i.e., $P_I(n) = \lambda(A/I^{n+1})$ for all $n \gg 0$.
 Write
 \[
 P_I(z) = \sum_{i = 0}^{d}(-1)^ie_i(I)\binom{z + d - i}{d-i}.
 \]
The integers $e_i(I)$ is called the $i^{th}$-Hilbert coefficient of $I$. The number $e_0(I)$ is called the multiplicity  of $A$ \wrt \ $I$.
Let $\ov{I}$ denote the integral closure of $I$
For $0 \leq i \leq d$ set
$$E_i(I) = \{ J \mid  J \supset I \ \text{and} \ e_j(J) = e_j(I) \  \   \text{for} \  0 \leq j \leq i \}. $$
Now assume $A$ is quasi-unmixed with infinite residue field. By a work of Rees \cite{Rees}, $E_0(I)$ has a unique maximal element $\ov{I}$. Shah \cite{S}  proved that
each $E_i$ has a unique maximal element $I_i$ which is called the $i^{th}$ coefficient ideal of $I$. So we have a chain of ideals
\[
 I \subseteq I_d \subseteq I_{d-1} \subseteq I_{d-2} \subseteq \cdots \subseteq I_1 \subseteq I_0 = \ov{I}.
\]
If $\depth A > 0$ then $I_d = \widetilde{I}$ the Ratliff-Rush closure of $I$. Recall
\[
 \widetilde{I} = \bigcup_{ n \geq 1}(I^{n+1} \colon I^n).
\]
So $\widehat{I} = (I^{r+1} \colon I^r)$ for all $r \gg 0$ (see \cite{RR}).

Let $ G(I)  = \bigoplus_{n \geq 0}I^n/I^{n+1}$ be the associated graded ring of $I$. Fix an integer $r$ with $1 \leq r \leq d$. Then Shah proved that
if $\depth G(I) \geq r$  then $I^s = (I^s)_j $ for
$d + 1 - r \leq j  \leq d$, and for all $s \geq 1$; see \cite[Theorem 5]{S}. In particular if $G$ is \CM \ then $I^s = (I^s)_1$ for all $s \geq 1$.

To state our results we need to introduce some concepts. Let $R = \bigoplus_{n \geq 0}R_n$ be a standard graded algebra over a Artin local ring $(R_0, \m_0)$.
Let $M$ be a finitely generated  graded $R$-module. Let $H^i(M)$ denote the $i^{th}$-local cohomology module of $M$ \wrt \ $R_+$. It is well-known that
$H^{i}(M)$ are $*$-Artininian $R$-module (i.e., every descending chain of graded submodules stabilize). It follows that it's Matlis-dual $H^i(M)^\vee$ is a finitely generated
$R$-module. If $M$ is non-zero and of dimension $r$ then it is known that  $\dim H^i(M)^\vee \leq i$ for $0 \leq i \leq r-1$ and $\dim H^r(M)^\vee = r$; see
\cite[17.1.9 and 17.1.10]{BSh}.
We set dimension of the zero module to be $-1$.
In this paper we prove
\begin{theorem}\label{main}
 Let $(A,\m)$ be a \CM \ local ring of dimension $d \geq 2$ and with infinite residue field. Let $I$ be an $\m$-primary ideal of $A$.
 Fix an integer $r$ with $1 \leq r \leq d -1$. If $\dim H^i(G(I))^\vee \leq i -1$ for $1 \leq i \leq r$ then  $(I^n)_{d -r} = \widetilde{I^n}$ for all $n\geq 1$.
\end{theorem}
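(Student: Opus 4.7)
Since $\widetilde{I^n} = (I^n)_d \subseteq (I^n)_{d-r}$ always, the content of the theorem is the reverse inclusion $(I^n)_{d-r} \subseteq \widetilde{I^n}$. The plan is to induct on $r$, with the inductive hypothesis applied in dimension $d-1$ for the parameter $r-1$, passing between dimensions by cutting with a general superficial element $x \in I$.

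For the base case $r=1$, the hypothesis gives that $H^1(G)$ has finite length. Setting $J := (I^n)_{d-1}$, one has $e_j(J) = e_j(I^n)$ for $0 \leq j \leq d-1$, so the only obstruction to $J \subseteq (I^n)_d = \widetilde{I^n}$ is the agreement of $e_d$. I would compare $\lambda(J/I^{n+k})$ with $P_J(n+k-1)$ for $k \gg 0$, expressing $e_d(J) - e_d(I^n)$ in terms of the finite lengths of $H^0(G)$ and $H^1(G)$ in appropriate degrees; the finite-length hypothesis on $H^1$ is precisely what is needed to force this difference to vanish.

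For the inductive step ($r \geq 2$), choose $x \in I$ a sufficiently general superficial element and set $\bar A = A/xA$, $\bar I = I\bar A$. The argument splits into three moves. First, transfer the cohomology hypothesis: from the four-term exact sequence
\[
0 \to (0:_G x^*)(-1) \to G(-1) \xrightarrow{x^*} G \to G/x^* G \to 0,
\]
together with the standard identification of $G(\bar I)$ with $G/x^* G$ in high degrees, split into two short exact sequences and take long exact sequences in local cohomology, tracking dimensions of Matlis duals. Genericity of $x$ ensures $x^*$ avoids the associated primes of each $H^j(G)^\vee$, cutting dimension by exactly one, which yields $\dim H^j(G(\bar I))^\vee \leq j-1$ for $1 \leq j \leq r-1$. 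Second, invoke the inductive hypothesis in $\bar A$: $(\bar I^n)_{d-r} = \widetilde{\bar I^n}$; combined with the superficiality-based inclusion $(I^n)_{d-r}\bar A \subseteq (\bar I^n)_{d-r}$, this gives $(I^n)_{d-r}\bar A \subseteq \widetilde{\bar I^n}$. Third, lift back: use that $x$ is superficial on the Ratliff-Rush filtration together with a reapplication of the base case in $A$ itself to conclude $(I^n)_{d-r} \subseteq \widetilde{I^n}$.

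The main obstacle will be the cohomological descent in the first move. The long exact sequences intertwine $H^j(G)$ and $H^{j+1}(G)$ with a degree shift, and passing to Matlis duals reverses dimensions in a delicate way; establishing that a sufficiently general $x^*$ cuts the dimension of each relevant $H^j(G)^\vee$ by exactly one, without creating new associated primes, requires a careful prime-avoidance argument. The lifting step in the third move is also subtle, since $x$ need not be regular on $A/\widetilde{I^n}$, so an Artin-Rees-type bound on $\widetilde{I^n} \cap xA$ may be needed to close the argument.
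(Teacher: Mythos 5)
Your strategy---induction on $r$ by cutting the coefficient-ideal statement itself with a general superficial $x\in I$---is not the paper's, and as written it has genuine gaps. The most serious is your ``second move.'' For $J=(I^n)_{d-r}$ you invoke a ``superficiality-based inclusion'' $J\ov{A}\subseteq(\ov{I}^{\,n})_{d-r}$, which would require comparing $e_j(J\ov{A})$ with $e_j(I^n\ov{A})$ using the known equalities $e_j(J)=e_j(I^n)$ upstairs. But the standard preservation of Hilbert coefficients modulo a superficial element requires the element to lie in the ideal whose coefficients are being computed, and $x\in I$ lies in neither $I^n$ (for $n\ge 2$) nor, in general, in $J$; already $e_0(I^n\ov{A})=n^{d-1}e_0(I)$ while $e_0(I^n)=n^{d}e_0(I)$, so the naive transfer is simply false. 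The same defect sinks your ``third move'': passing from $(I^n)_{d-r}\ov{A}\subseteq\widetilde{\ov{I}^{\,n}}$ back to $A$ needs either that the Ratliff--Rush closure commutes with reduction modulo $x$ (false in general---controlling exactly this failure is the subject of the author's papers on $L^I$) or the same unproved transfer of $e_j$'s; an ``Artin--Rees-type bound'' is not a substitute. Note the paper itself warns (see \ref{complete-red}) that coefficient ideals are not even known to behave well under completion, and accordingly it never pushes them through a quotient: superficial elements are used only on the cohomology modules in \ref{order-L}--\ref{o-L-less}.

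The second gap is the base case, on which your induction ultimately leans. Knowing $e_j(J)=e_j(I^n)$ for $j\le d-1$, your Hilbert-polynomial comparison shows only that $\lambda(J^{m}/I^{nm})$ is \emph{eventually constant}; the theorem requires it to be eventually \emph{zero}, and ``the finite-length hypothesis on $H^1$ is precisely what is needed'' is an assertion, not a mechanism. The paper's mechanism is different in kind: set $W=\R(J)/\R(I^n)$, note $\dim W\le r$ from the polynomial computation (\ref{dim-bound}), fit $W(+1)$ into the exact sequence $0\rt W(+1)\rt L^{I^n}(A)\rt L^{J}(A)\rt 0$, and use the long exact sequence in local cohomology together with Matlis duality to squeeze $\dim H^{c}(W)^\vee\le c-1$ for $c=\dim W$, which contradicts the nonvanishing $\dim H^{\dim W}(W)^\vee=\dim W$ of \ref{dim-dual} unless $\dim W=0$; then $J^m=I^{nm}$ for $m\gg 0$ and $J\subseteq\widetilde{I^n}$. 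The hypothesis on $G(I)$ enters only through \ref{o-L-less}, which converts it to $\dim H^i(L^I(A))^\vee\le i-1$, and powers of $I$ are handled by applying the Veronese functor to $L^I(A)$---precisely so that one never has to compare coefficient ideals of $I^n$ with those of $I$ or of $I\ov{A}$. To repair your proposal you would essentially have to import this $W$-plus-duality argument, at which point the induction on $r$ becomes unnecessary.
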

As an easy consequence we obtain
\begin{corollary}\label{gcm}
 (with hypotheses as in \ref{main}) If $G(I)$ is generalized \CM \ then $(I^n)_1 = \widetilde{I^n}$ for all $n \geq 1$.
\end{corollary}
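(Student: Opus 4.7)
The plan is to derive the corollary as a direct application of Theorem \ref{main} with the specific choice $r = d-1$. So I would focus entirely on verifying that the local-cohomology hypothesis of Theorem \ref{main} is satisfied whenever $G(I)$ is generalized Cohen-Macaulay.

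First I would recall the definition: $G = G(I)$ being generalized Cohen-Macaulay means that the local cohomology modules $H^i(G) = H^i_{G_+}(G)$ have finite length as $G$-modules for $0 \leq i \leq d-1$. Since a module of finite (nonzero) length has Krull dimension $0$, and we set $\dim 0 = -1$, this gives $\dim H^i(G) \leq 0$ for $1 \leq i \leq d-1$. Next I would invoke Matlis duality in the graded setting: because $H^i(G)$ is $*$-Artinian, the Matlis dual $H^i(G)^\vee$ is a finitely generated $G$-module, and when $H^i(G)$ has finite length the same is true of its dual with equal length. In particular $\dim H^i(G)^\vee \leq 0$ for all $1 \leq i \leq d-1$.

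Now I would check the hypothesis of Theorem \ref{main} for $r = d-1$: we need $\dim H^i(G)^\vee \leq i - 1$ for $1 \leq i \leq d-1$. For $i = 1$ the bound is $\dim H^1(G)^\vee \leq 0$, which holds by the observation above. For $2 \leq i \leq d-1$ the required bound $\leq i-1$ is even weaker than the bound $\leq 0$ we already have, so it holds automatically. Thus the hypothesis of Theorem \ref{main} is satisfied with $r = d-1$.

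Applying Theorem \ref{main} with this value of $r$ yields $(I^n)_{d-r} = (I^n)_{1} = \widetilde{I^n}$ for all $n \geq 1$, which is exactly the claim. There is no real obstacle here beyond bookkeeping: the only subtlety worth flagging is that one must take the convention $\dim 0 = -1$ seriously, so that in cases where some $H^i(G)$ happens to vanish the inequality $\dim H^i(G)^\vee \leq i-1$ still holds without issue, and that one uses the fact that for finite length modules the Matlis dual preserves length (hence dimension $\leq 0$), rather than trying to bound $\dim H^i(G)^\vee$ directly from generic dimension estimates.
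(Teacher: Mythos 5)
Your proposal is correct and is exactly the intended derivation: generalized Cohen--Macaulayness gives finite length of $H^i(G(I))$ for $i<d$, hence $\dim H^i(G(I))^\vee\leq 0\leq i-1$ for $1\leq i\leq d-1$, and Theorem \ref{main} with $r=d-1$ yields $(I^n)_1=\widetilde{I^n}$. The paper states the corollary as an ``easy consequence'' without writing out a proof, and your bookkeeping (including the $\dim 0=-1$ convention and the preservation of finite length under Matlis duality) fills in precisely the steps the author leaves implicit.
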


In \cite{C} the first coefficient ideal $I$ is related to the $S_2$-ification of the Rees algebra. From his results it follows that if $A$ is an analytically unramified
\CM \ of dimension $d \geq 2$ then the $S_2$-ification of the Rees algebra $\R(I) = A[It]$
is  $\bigoplus_{n \geq 0}(I^n)_1 $.
 As a consequence we obtain
 \begin{corollary}\label{gcm-app}
 (with hypotheses as in \ref{gcm}) Further assume $A$ is analytically unramified domain.
 If $G(I)$ is generalized \CM \ then  $S_2$-ification of the Rees algebra $\R(I)$ is  $\bigoplus_{n \geq 0} \widetilde{I_n}$. In particular $\R(I^n)$ is $S_2$ for all $n \gg 0$.
\end{corollary}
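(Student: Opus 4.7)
The plan is to deduce both assertions from Corollary~\ref{gcm} together with the result of Ciuperca~\cite{C} cited in the introduction. For the first assertion the reasoning is essentially a one-line substitution: Ciuperca's theorem identifies the $S_2$-ification of $\R(I)$ with $\bigoplus_{n \geq 0}(I^n)_1$, while Corollary~\ref{gcm} equates each $(I^n)_1$ with $\wt{I^n}$; combining these gives the $S_2$-ification as $T := \bigoplus_{n \geq 0}\wt{I^n}$.

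For the ``in particular'' statement I would exploit the eventual stability of the Ratliff-Rush filtration. Because $\depth A > 0$, a classical result of Ratliff-Rush \cite{RR} supplies an integer $n_0$ with $\wt{I^m}=I^m$ for every $m \geq n_0$. Fix any $n \geq n_0$ and form the $n$-th Veronese subalgebra of $T$: for every $m \geq 1$ one has $nm \geq n_0$, so
\[
T^{(n)} = \bigoplus_{m \geq 0}\wt{I^{nm}} = A \oplus \bigoplus_{m \geq 1} I^{nm} = \R(I^n).
\]
Thus it suffices to check that the Veronese subring $T^{(n)}$ inherits the $S_2$-property from $T$.

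This last descent step is the point I expect to require the most care. My plan is the following. Given any prime $P$ of $T^{(n)}$, choose a prime $Q$ of $T$ contracting to $P$, which exists because $T$ is a finite integral extension of $T^{(n)}$ and automatically satisfies $\text{ht}(Q)=\text{ht}(P)$. Using the $T^{(n)}$-module decomposition $T = \bigoplus_{i=0}^{n-1}\bigl(\bigoplus_m T_{nm+i}\bigr)$, the localization $T^{(n)}_P$ is a direct summand of $T_P$ as a $T^{(n)}_P$-module; together with the identity $\depth_{T^{(n)}_P}(T_P) = \min_{Q' \cap T^{(n)} = P}\depth(T_{Q'})$ and the $S_2$-property of $T$, this yields $\depth(T^{(n)}_P) \geq \depth_{T^{(n)}_P}(T_P) \geq \min(\text{ht}(P),2)$, so $T^{(n)} = \R(I^n)$ is $S_2$ for every $n \geq n_0$. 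A conceivable alternative route -- applying Corollary~\ref{gcm} directly to $I^n$ -- would require $G(I^n)$ to remain generalized Cohen-Macaulay for $n \gg 0$, which is not addressed in the paper and seems a non-trivial independent issue; so the Veronese-descent argument is my preferred approach.
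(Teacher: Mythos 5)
Your proof is correct. The paper itself offers no written proof of this corollary --- it is presented as an immediate consequence of Ciuperca's identification of the $S_2$-ification of $\R(I)$ with $\bigoplus_{n\geq 0}(I^n)_1$ together with Corollary~\ref{gcm} --- and your first paragraph is exactly that intended one-line substitution. Your treatment of the ``in particular'' clause, which the paper leaves entirely to the reader, is sound: the Ratliff--Rush stability $\wt{I^m}=I^m$ for $m\gg 0$ from \cite{RR} does apply (as $\depth A=d\geq 2$), the Veronese identification $T^{(n)}=\R(I^n)$ for $n\geq n_0$ is right, and the descent of $S_2$ to the Veronese subring via the direct-summand decomposition, the formula $\depth_{T^{(n)}_P}(T_P)=\min_{Q\cap T^{(n)}=P}\depth(T_Q)$, and the equality of heights is valid --- though note that $\operatorname{ht}(Q)=\operatorname{ht}(P)$ is not quite ``automatic'' for a finite integral extension; here it follows from the dimension formula because $T^{(n)}$ and $T$ are domains finitely generated over the universally catenary ring $A$. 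I would point out, however, that your stated reason for avoiding the direct route is a red herring: one need not know that $G(I^n)$ is generalized \CM. Applying Ciuperca's theorem to the ideal $I^n$ (his result requires no hypothesis on the associated graded ring) gives that the $S_2$-ification of $\R(I^n)$ is $\bigoplus_{m\geq 0}((I^n)^m)_1=\bigoplus_{m\geq 0}(I^{nm})_1$, and Corollary~\ref{gcm} applied to $I$ itself already yields $(I^{nm})_1=\wt{I^{nm}}$ for all $m$; combined with Ratliff--Rush stability this shows $\R(I^n)$ equals its own $S_2$-ification for $n\gg 0$, with no Veronese-descent lemma needed. Either way the conclusion stands.
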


 We now describe in brief the contents of this paper. In section two we introduce some notation and discuss some preliminary results that we need. In section three we discuss some properties of $L^I(M)$ that we need.
 In section four we discuss our results on dimensions of duals of certain local cohomology modules. In section five we prove Theorem \ref{main}.

\section{Notation and Preliminaries}
In this section we introduce some notation and discuss a few preliminaries which will
be used in this paper.
In this paper all rings are commutative Noetherian and all modules (\emph{unless stated otherwise})
are assumed finitely generated. We  use  terminology from  \cite{BH}.
Let
$(A,\m)$ be a local ring of dimension $d$ with residue field $k =
A/\m$. Let $M$ be \CM \  $A$-module of dimension $r$.  Throughout $I$ is an $\m$-primary ideal.

\s If $p \in M$ is non-zero and  $j$ is the largest integer such that $p \in I^{j}M$,
then we let $p^*$ denote the image of $p$
 in $I^{j}M/I^{j+1}M$.

\s
\label{hilbcoeff}
 The \emph{Hilbert function} of $M$ with respect to $I$ is the function
\[
 H_{I}(M,n) =  \lambda(I^nM/I^{n+1}M)\quad \text{for all} \ n \geq 0.
\]
It is well known that the formal power series $\sum_{n \geq 0}H_{I}(M,n)z^n$
represents a rational function of a
special type:
\begin{equation*}
\sum_{n \geq 0}H_{I}(M,n) z^n = \frac{h_{I}(M ,z)}{(1-z)^{r}}\quad \text{where}
\ r = \dim M \ \text{and} \ h_{I}(M,z) \in\mathbb{Z}[z].
\end{equation*}
 Set
$e_{i}^{I}(M) = (h_I(M,z))^{(i)}(1)/i! $ for all $i\geq 0$. The integers $e_{i}^{I}(M)$ are called \emph{Hilbert
coefficients} of $M$ with
respect to $I$.
The number $e_{0}^{I}(M)$ is also called the \emph{multiplicity} of $M$ with respect
to $I$.

\s Set $G(I) = \bigoplus_{n \geq 0}I^n/I^{n+1}$ to be the associated graded ring of $A$ \wrt \ $I$. If $M$ is an $A$-module then set
$G_I(M) = \bigoplus_{n \geq 0} I^nM/I^{n+1}M$ to be the associated graded module of $M$ \wrt \ $I$.

\s For definition and  basic properties of superficial sequences see \cite[p.\ 86-87]{Pu1}

\s\textbf{ Associated graded module and Hilbert function mod a superficial element: }\label{mod-sup-h} \\
Let $x \in I$
be $M$-superficial.  Set $N = M/xM$. There is a well-known relation between the Hilbert coefficients of $M$ and $N$. For
details   see cf., \cite[Corollary 10]{Pu1}.

\s \textbf{Base change:}
\label{AtoA'}
In our arguments we do use  a few base changes. See \cite[1.4]{Pu5}
for details.

We need the following result. It is definitely known to the experts. However we are unable to find a reference. We sketch a proof.
\begin{proposition}\label{dim-dual}
 Let $R = \bigoplus_{n \geq 0}R_n$ be a standard graded algebra over a complete Noetherian local ring $(R_0,\m_0)$. Let $M$ be a \emph{non-zero} finitely generated
 graded $R$-module of dimension $r$ with $\lambda(M_n) < \infty $ for all $n \in \Z$. Set $r = \dim M$. If $E$ is a graded $R$-module, set $E^\vee$ to be the Matlis dual
 of $E$ \wrt \ $R$. Let $\m = \m_0 \oplus R_+$ be the unique maximal homogeneous ideal of $R$. Then $\dim H^i_\m(M)^\vee \leq i$ for $i = 0,\ldots, r-1$ and
 $\dim H^r_\m(M)^\vee  = r$.
\end{proposition}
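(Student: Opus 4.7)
The plan is to reduce the problem to a graded Gorenstein ambient ring and then invoke graded local duality, at which point the statement becomes the classical bound on the dimensions of Ext modules over a Cohen--Macaulay ring with canonical module.

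First I would replace $R$ by a graded Gorenstein ring surjecting onto it. By Cohen's structure theorem the complete local ring $R_0$ is a quotient of some complete regular local ring $(T_0, \n_0)$; and since $R$ is standard graded, $R = R_0[y_1,\ldots,y_s]$ with $y_i \in R_1$. Set $T = T_0[X_1,\ldots,X_s]$, let $\pi \colon T \rt R$ be the graded surjection $X_i \mapsto y_i$, and put $\n = \n_0 \oplus T_+$. Then $T$ is graded Gorenstein and $\sqrt{\n R} = \m$. Viewing $M$ as a graded $T$-module through $\pi$, one has the identifications $H^i_\m(M) = H^i_\n(M)$, $\dim_R N = \dim_T N$ for every finitely generated graded $R$-module $N$, and coincidence of the graded Matlis duals over $R$ and over $T$ (this last following from ${}^*E_R(R/\m) \cong {}^*\Hom_T(R, {}^*E_T(T/\n))$ together with Hom--tensor adjunction). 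Hence we may assume $R = T$ is graded Gorenstein.

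Setting $N = \dim T$, the graded canonical module of $T$ has the form $T(a)$ for some $a \in \Z$, and graded local duality produces a graded isomorphism
\[
H^i_\n(M)^\vee \;\cong\; \Ext^{N-i}_T(M, T(a)), \quad i \geq 0.
\]
Now I appeal to the classical fact that over a Cohen--Macaulay local ring $T$ of dimension $N$ with canonical module $\omega_T$, any finitely generated $T$-module $M$ of dimension $r$ satisfies $\Ext^{N-i}_T(M, \omega_T) = 0$ for $i > r$, $\dim \Ext^{N-i}_T(M, \omega_T) \leq i$ for $0 \leq i \leq r$, and $\dim \Ext^{N-r}_T(M, \omega_T) = r$ (the last being a canonical module of $M$); see e.g.\ Bruns--Herzog, Theorem~3.3.10. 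Combined with the displayed isomorphism, this is exactly the claim.

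The one genuinely delicate point is the reduction step, specifically the verification that the graded Matlis dual and Krull dimension are insensitive to the passage from $R$ to $T$. This is routine but easy to get wrong, since the injective hull involved and the local cohomology are computed over different rings; once that compatibility is in place, however, the rest is a direct application of standard machinery.
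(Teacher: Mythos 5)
Your argument is correct, but it is a genuinely different route from the one in the paper. The paper's sketch passes to $S = R/\operatorname{ann}_R M$, observes that the hypothesis $\lambda(M_n) < \infty$ forces $S_0$ to be Artin local, and then quotes the known result over an Artinian base \cite[17.1.9 and 17.1.10]{BSh} together with the graded independence theorem and the compatibility of Matlis duals over $R$ and $S$. You instead go upward to a graded Gorenstein cover $T = T_0[X_1,\ldots,X_s]$ and convert the statement, via graded local duality, into the dimension bounds $\dim \Ext^{N-i}_T(M,\omega_T) \leq i$ and $\dim \Ext^{N-r}_T(M,\omega_T) = r$ of \cite[3.3.10]{BH}. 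Your reduction step (independence of local cohomology, of Krull dimension, and of the graded Matlis dual under the surjection $T \rt R$) is the exact mirror image of the paper's reduction step and is handled with the same adjunction argument, so neither proof has an advantage there. What your route buys: it is self-contained modulo standard duality machinery, and it never actually uses the hypothesis $\lambda(M_n) < \infty$, so it proves a slightly more general statement. What it costs: two points need care that the paper's citation sidesteps. First, graded local duality in the form $H^i_\n(M)^\vee \cong \Ext^{N-i}_T(M,T(a))$ must be available over a $*$complete $*$local base that is complete local but not Artinian; this is true (Matlis duality for finitely generated versus $*$Artinian graded modules, \cite[3.6.17]{BH}, gives $N^{\vee\vee}\cong N$ and lets you dualize the usual form of local duality), but you should say so rather than treat it as automatic. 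Second, \cite[3.3.10]{BH} is stated for Cohen--Macaulay \emph{local} rings, so you must localize at $\n$ and use that a finitely generated graded module over a graded $*$local ring has the same dimension as its localization at the maximal homogeneous ideal (all minimal primes of its annihilator being graded); this is routine but is a genuine step. Neither point is a gap, only a place where the write-up should be explicit.
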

\begin{proof}[Sketch]
 The result is known if $R_0$ is Artin local \cite[17.1.9 and 17.1.10]{BSh}. Set $S = R/\ann_R M$ and let $\n$ be the maximal homogeneous ideal of $S$ . Then note that $S_0$ is Artin local
 and $H^i_\n(M) = H^i_\m(M)$ by graded independence theorem of local cohomology, \cite[13.1.6]{BSh}. It is also not difficult to show that the Matlis-dual of $H^i_I(M)$ \wrt \ $R$ is
 isomorphic to the Matlis dual of $H^i_\n(M)$ \wrt \ $S$. The result follows from the Artin local case.
\end{proof}

\section{Some Properties of $L^{I}(M)$}\label{Lprop}

In this section we collect some of  the properties of $L^{I}(M) = \bigoplus_{n\geq 0}M/I^{n+1}M $ which we proved in \cite{Pu5}. Throughout this section
$(A,\m)$ is a  local ring with infinite residue field, $M$ is a \emph{\CM }\ module of dimension $r \geq 1$ and $I$ an $\m$-primary ideal.

\s \label{mod-struc} Set $\Sc(I) = A[It]$;  the Rees Algebra of $I$. In \cite[4.2]{Pu5} we proved that \\
$L^{I}(M)$ is a $\Sc(I)$-module. Note that $L^I(M)$ is \emph{not finitely generated} $\R(I)$-module.

\s Let $H^{i}(-) = H^{i}_{\M}$ denote the $i^{th}$-local cohomology functor \wrt \ $\M$.
 Recall a graded $\Sc(I)$-module $V$ is said to be
\textit{*-Artinian} if
every descending chain of graded submodules of $V$ terminates. For example if $E$ is a finitely generated $\Sc(I)$-module then $H^{i}(E)$ is *-Artinian for all
$i \geq 0$.

\s \label{zero-lc} In \cite[4.7]{Pu5} we proved that
\[
H^{0}(L^I(M)) = \bigoplus_{n\geq 0} \frac{\wt{I^{n+1}M}}{I^{n+1}M}.
\]
Here $\widetilde{KM}$ denotes the Ratliff-Rush closure of $M$ \wrt \ an ideal $K$.  Recall
\[
\widetilde{KM} = \bigcup_{i \geq 1}K^{i+1}M \colon K^i.
\]
\s \label{Artin}
For $L^I(M)$ we proved that for $0 \leq i \leq  r - 1$
\begin{enumerate}[\rm (a)]
\item
$H^{i}(L^I(M))$ are  *-Artinian; see \cite[4.4]{Pu5}.
\item
$H^{i}(L^I(M))_n = 0$ for all $n \gg 0$; see \cite[1.10 ]{Pu5}.
\item
 $H^{i}(L^I(M))_n$  has finite length
for all $n \in \mathbb{Z}$; see \cite[6.4]{Pu5}.
\item
For $0 \leq i \leq r-1$
there exists polynomial $q_i(z) \in \mathbb{Q}[z]$ such that $q_i(n) = \ell(H^i(L^I(M))_n)$ for all $n \ll 0$.

\end{enumerate}

\s \label{II-FES} Let $x$ be  $M$-superficial \wrt \ $I$, i.e., $(I^{n+1}M \colon x) = I^nM$ for all $n \gg 0$. Set  $N = M/xM$ and $u =xt \in \Sc(I)_1$. Notice $L^I(M)/u L^I(M) = L^I(N)$.
For each $n \geq 1$ we have the following exact sequence of $A$-modules:
\begin{align*}
0 \xar \frac{I^{n+1}M\colon x}{I^nM} \xar \frac{M}{I^nM} &\xrightarrow{\psi_n} \frac{M}{I^{n+1}M} \xar \frac{N}{I^{n+1}N} \xar 0, \\
\text{where} \quad \psi_n(m + I^nM) &= xm + I^{n+1}M.
\end{align*}
This sequence induces the following  exact sequence of $\Sc$-modules:
\begin{equation}
\label{dagg}
0 \xar \B^{I}(x,M) \xar L^{I}(M)(-1)\xrightarrow{\Psi_u} L^{I}(M) \xrightarrow{\rho^x}  L^{I}(N)\xar 0,
\end{equation}
where $\Psi_u$ is left multiplication by $u$ and
\[
\B^{I}(x,M) = \bigoplus_{n \geq 0}\frac{(I^{n+1}M\colon_M x)}{I^nM}.
\]
We call (\ref{dagg}) the \emph{second fundamental exact sequence. }

\s \label{long-mod} Notice  $\lambda\left(\B^{I}(x,M) \right) < \infty$. A standard trick yields the following long exact sequence connecting
the local cohomology of $L^I(M)$ and
$L^I(N)$:
\begin{equation}
\label{longH}
\begin{split}
0 \xar \B^{I}(x,M) &\xar H^{0}(L^{I}(M))(-1) \xar H^{0}(L^{I}(M)) \xar H^{0}(L^{I}(N)) \\
                  &\xar H^{1}(L^{I}(M))(-1) \xar H^{1}(L^{I}(M)) \xar H^{1}(L^{I}(N)) \\
                 & \cdots \cdots \\
               \end{split}
\end{equation}

\s\label{Veronese} One huge advantage of considering $L^I(M)$ is that it behaves well \wrt \ the Veronese functor. Notice
\[
\left(L^I(M)(-1)\right)^{<t>} = L^{I^t}(M)(-1) \quad \text{for all} \ t \geq 1.
\]
Also note that $\Sc(I)^{<t>} = \Sc(I^t)$ and that $(\M_{\Sc(I)})^{<t>} = \M_{\Sc(I^t)}$. It follows that for all $i \geq 0$
\[
\left(H^i_{\M_{\Sc(I)}}(L^I(M)(-1)\right)^{<t>} \cong H^i_{\M_{\Sc(I^t)}}(L^{I^t}(M)(-1).
\]

\section{results on dimensions of dual's of local cohomology modules}
Throughout this section $(A,\m)$ is a complete Noetherian local ring with infinite
residue field and $M$ is a \CM \ $A$-module of dimension $r \geq 1$. Furthermore we will assume that $I$ is an $\m$-primary ideal.
In this section we prove some results regarding dimensions of dual's of graded local cohomology modules of $G_I(M)$ and $L^I(M)$. Throughout we compute local cohomology \wrt \ to the graded maximal ideal of the Rees algebra $\Sc(I) = A[It]$. If $E$ is a graded $\Sc$-module then we denote its Matlis dual by $E^\vee$.

\s Let $mod^f(\R(I))$ be the category of finitely generated graded $\R(I)$-modules $M$  with $\lambda(M)_n < \infty$ for all $n \in \Z$.
Let $A^f(\R(I))$ be the category of $*$-Artinian  graded $\R(I)$-modules $L$  with $\lambda(L)_n < \infty$ for all $n \in \Z$.
The usual Matlis duality between finitely generated graded $\Sc$-modules and $*$-Artinian $\R(I)$-modules (see \cite[3.6.17]{BH}) restricts to a duality between
$mod^f(\R(I))$ and $A^f(\R(I))$.

\s \label{filt-reg} Let $M \in mod^f(\R(I))$. Then it is not difficult to show that there exists a non-empty Zariski open subset $U$ of $I/\m I$ such  that if $x \in I$ such that $\ov{x} \in U$ then $\ker( M(-1)\xrightarrow{xt} M)$ has finite length. By Matlis duality we also obtain that
$\coker( M^\vee(-1)\xrightarrow{xt} M^\vee)$ has finite length.

\s \label{growth} Let $E \in A^f(\R(I))$. Then by Matlis duality it follows that there exists a polynomial $q_E(z)\in \mathbb{Q}[z]$ such that $q_E(n) = \lambda(E_n)$ for all $n \ll 0$. Furthermore if $E \neq 0$ then $\deg q_E(z) = \dim E^\vee - 1$. We call $q_E(z)$ as the \textit{dual Hilbert-polynomial} of $E$.

Our first result is
\begin{proposition}\label{order-L}
Let $M$ be a \CM \ $A$-module of dimension $r \geq 1$. Then $\dim H^i(L^I(M))^\vee
\leq i$ for $0 \leq i \leq r-1$.
\end{proposition}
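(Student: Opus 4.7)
The plan is to proceed by induction on $r = \dim M$, combining the second fundamental exact sequence \ref{II-FES} with Matlis duality and Krull's principal ideal theorem.

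For the base case $r = 1$, only $i = 0$ is in range. By \ref{zero-lc}, $H^0(L^I(M))$ equals $\bigoplus_{n \geq 0} \widetilde{I^{n+1}M}/I^{n+1}M$, which vanishes in high degrees by stability of the Ratliff-Rush closure; together with the finite length of each graded piece (\ref{Artin}(c)), this gives $H^0(L^I(M))$ finite length, so $\dim H^0(L^I(M))^\vee \leq 0$.

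For the inductive step I assume the result for \CM\ modules of dimension less than $r$. I choose an $M$-superficial element $x \in I$ and set $N = M/xM$, which is \CM\ of dimension $r - 1$; induction then gives $\dim H^j(L^I(N))^\vee \leq j$ for $0 \leq j \leq r - 2$. The case $i = 0$ is handled as in the base case. For $1 \leq i \leq r - 1$, let $u = xt \in \mathcal{R}(I)_1$ and consider the multiplication map $\Psi_u \colon H^i(L^I(M))(-1) \to H^i(L^I(M))$. From the long exact sequence \ref{longH}, $\ker \Psi_u$ is a homomorphic image of $H^{i-1}(L^I(N))$; since $i - 1 \leq r - 2$, the inductive hypothesis yields $\dim (\ker \Psi_u)^\vee \leq i - 1$. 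Matlis duality identifies $(\ker \Psi_u)^\vee$ with the cokernel of multiplication by $u$ on the finitely generated $\mathcal{R}(I)$-module $H^i(L^I(M))^\vee$ (the finite generation following from \ref{growth} since $H^i(L^I(M))$ is $*$-Artinian by \ref{Artin}(a)), so
\[
\dim \bigl( H^i(L^I(M))^\vee / u H^i(L^I(M))^\vee \bigr) \leq i - 1,
\]
and Krull's principal ideal theorem then gives $\dim H^i(L^I(M))^\vee \leq i$.

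The subtle point, and the reason the argument closes at the top index $i = r - 1$, is that the Krull step only needs a bound on the cokernel of multiplication by $u$ on the dual, which under Matlis duality corresponds to $\ker \Psi_u$ (not to $\coker \Psi_u$). The kernel is always controlled by $H^{i-1}(L^I(N))$, within the range $i - 1 \leq r - 2$ covered by induction. By contrast $\coker \Psi_u$ embeds in $H^i(L^I(N))$, and for $i = r - 1$ this is $H^{r-1}(L^I(N))$, which is not covered by \ref{Artin} (since $\dim N = r - 1$) and about which we have no direct control; happily, no bound on this term is ever used. The main obstacle is thus purely bookkeeping---making sure the direction of Matlis duality on the fundamental exact sequence is applied correctly, so that the uncontrolled term is the one we can afford to ignore.
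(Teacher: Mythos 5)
Your proof is correct, and it shares the paper's overall skeleton (induction on $r$, passage to $N = M/xM$ by a superficial element, and the long exact sequence of \ref{long-mod} tying $H^{i-1}(L^I(N))$ to multiplication by $xt$ on $H^i(L^I(M))$), but the finishing mechanism is genuinely different. The paper first chooses $x$ generically via \ref{filt-reg} so that $\coker \Psi_u$ has finite length, and then works with the dual Hilbert polynomial $q_i^M$ of $H^i(L^I(M))$: the long exact sequence shows that $q_i^M(z) - q_i^M(z-1)$ agrees for $z \ll 0$ with $-q_{i-1}^N(z)$ up to finite-length corrections, whence $\deg q_i^M \leq i-1$ and $\dim H^i(L^I(M))^\vee \leq i$. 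You instead dualize only the left half of the sequence, identify $(\ker \Psi_u)^\vee$ (up to shift) with $H^i(L^I(M))^\vee / u\, H^i(L^I(M))^\vee$, bound its dimension by $\dim H^{i-1}(L^I(N))^\vee \leq i-1$ from the induction hypothesis since $\ker \Psi_u$ is a quotient of $H^{i-1}(L^I(N))$, and finish with Krull's principal ideal theorem. The two arguments are close cousins --- a degree bound on the first difference of $q_i^M$ is essentially a dimension bound on $\coker(u)$ acting on the dual --- but your version buys something concrete: it needs no control on $\coker \Psi_u$ at all, hence no generic choice of $x$ from \ref{filt-reg}; any $M$-superficial $x$ works. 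Your closing remark correctly isolates why the top index $i = r-1$ is harmless: the only uncontrolled term, $H^{r-1}(L^I(N))$, sits on the cokernel side and is never used. The paper's polynomial formulation has the offsetting advantage that it is the template reused almost verbatim in Propositions \ref{o-G-less} and \ref{o-L-less}.
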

\begin{proof}
We prove the result by induction on $r = \dim M$. If $r = 1$ then by \ref{zero-lc}
we get that $H^0(L^I(M))$ has finite length. So we have nothing to show.
Now assume $r \geq 2$ and the result has been shown for \CM \ modules of dimension $= r -1$. By \ref{filt-reg} we may choose $x \in I$ which is $M$-superficial and
$\coker H^i( L^I(M))(-1) \xrightarrow{xt} H^i( L^I(M)))$ has finite length for $i = 1,\ldots, r-1$.
Let $q_i^M(z)$ be the dual Hilbert-polynomial of $H^i( L^I(M))$.
By \ref{zero-lc}
we get that $H^0(L^I(M))$ has finite length. Set $N = M/xM$. Now assume that
$1 \leq i \leq r-1$. Then by our construction and \ref{long-mod} there exists finite length modules $U_i, V_i$ such that we have an exact sequence
\[
0 \rt U_i \rt H^{i-1}(L^I(N)) \rt H^{i}(L^I(M))(-1) \xrightarrow{xt} H^i( L^I(M))) \rt V_i \rt 0.
\]
By our induction hypothesis it follows that $\deg (q_i^M(z) - q_i^M(z-1)) \leq i-2$. So $\deg q_i^M(z) \leq i -1$. So $\dim H^i(L^I(M))^\vee
\leq i$.  The result follows.
\end{proof}

\begin{proposition}\label{o-G-less}
Let $M$ be a \CM \ $A$-module of dimension $r \geq 1$. Assume that there is $s$ with $  2 \leq s \leq r-1$ such that for  $1 \leq i  \leq s$ we have  \\ $\dim H^i(G_I(M))^\vee \leq i -1$. Then there exists a non-empty Zariski-open subset $W$ of $I/\m I$ such that if $x \in I$ and $\ov{x} \in W$ we have $\dim H^i(G_I(M/ x M))^\vee \leq i -1$ for $1 \leq i \leq  s-1$.
\end{proposition}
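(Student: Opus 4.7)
We plan to take $W\subseteq I/\m I$ to be the non-empty Zariski open cut out by: (i) $x$ is $M$-superficial, and (ii) for each $1\le j\le s$, the element $xt\in\R(I)_1$ is filter-regular on $T_j:=H^j(G_I(M))^\vee$, i.e.\ $(0:_{T_j}xt)$ has finite length. The duality recalled at the start of this section puts each $T_j$ in $mod^f(\R(I))$, so \ref{filt-reg} applies to produce each such Zariski open, and the intersection with the superficial locus is non-empty since the residue field is infinite. By Matlis duality, (ii) is equivalent to asking that $\coker(x^*\colon H^j(G_I(M))(-1)\to H^j(G_I(M)))$ has finite length for $1\le j\le s$, where $x^*\in G_I(A)_1$ is the image of $x$. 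Fix $x\in W$ and set $N=M/xM$.

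The engine of the argument is the exact sequence
\[
H^i(G_I(M))(-1)\xrightarrow{x^*} H^i(G_I(M))\to H^i(G_I(N))\to H^{i+1}(G_I(M))(-1)\xrightarrow{x^*} H^{i+1}(G_I(M))\quad(i\ge 1),
\]
obtained by splicing the long exact cohomology sequences of the four-term exact sequence
\[
0\to (0:_{G_I(M)} x^*)\to G_I(M)(-1)\xrightarrow{x^*} G_I(M)\to G_I(M)/x^*G_I(M)\to 0
\]
with that of the canonical surjection $G_I(M)/x^*G_I(M)\twoheadrightarrow G_I(N)$. Both $(0:_{G_I(M)} x^*)$ and the kernel of this last surjection have finite length (the latter being the standard finite-length defect of superficiality), and hence are acyclic in positive degrees. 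Extracting the portion around $H^i(G_I(N))$ gives
\[
0\to C_i\to H^i(G_I(N))\to K_{i+1}\to 0,
\]
with $C_i=\coker(x^*\text{ on }H^i(G_I(M)))$ and $K_{i+1}=\ker(x^*\text{ on }H^{i+1}(G_I(M))(-1))$.

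Dualising yields $0\to K_{i+1}^\vee\to H^i(G_I(N))^\vee\to C_i^\vee\to 0$. A direct identification gives $C_i^\vee\cong(0:_{T_i}x^*)$ and $K_{i+1}^\vee\cong T_{i+1}/x^*T_{i+1}$ (up to trivial shift). By (ii) we have $\dim C_i^\vee\le 0$, and $x^*$ is filter-regular on $T_{i+1}$. The hypothesis gives $\dim T_{i+1}\le i$ whenever $i+1\le s$, so the standard dimension-drop under a filter-regular element yields $\dim K_{i+1}^\vee\le i-1$. Combining, $\dim H^i(G_I(N))^\vee\le i-1$ for $1\le i\le s-1$.

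The delicate point is the ``filter-regular drops dimension by one'' step, which need not hold for graded modules over the non-Artinian base $A$. It is salvaged by noting that each $H^j(G_I(M))$, and hence $T_j$, is annihilated by $I\cdot\R(I)$ and so is naturally a graded module over $G_I(A)$, which is standard graded over the Artin-local ring $A/I$ (cf.\ the reduction step in \ref{dim-dual}). Over such a base the usual Hilbert-polynomial calculus applies and yields $\dim T/x^*T=\dim T-1$ for finitely generated $T$ with $\dim T\ge 1$ and $x^*$ filter-regular on $T$, which is precisely what the argument needs.
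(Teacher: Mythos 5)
Your proof is correct and takes essentially the same route as the paper's: the same Zariski-open set supplied by \ref{filt-reg}, the same exact sequence exhibiting $H^i(G_I(N))$ (up to finite-length modules) as an extension of $\ker(x^*$ on $H^{i+1}(G_I(M))(-1))$ by $\coker(x^*$ on $H^i(G_I(M)))$, and a final dimension count that the paper performs via dual Hilbert polynomials and you perform, equivalently, via the dimension drop of a filter-regular element on the Matlis duals viewed over $G_I(A)$. Your explicit identification of $C_i^\vee$ and $K_{i+1}^\vee$ and the remark reducing to the Artinian base $A/I$ are just a more careful write-up of the same argument.
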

\begin{proof}
 By \ref{filt-reg}  it follows that we may choose
 a non-empty Zariski-open subset $W$ of $I/\m I$ such that if $x \in I$ and $\ov{x} \in U$ then $x $ which is
  $M$-superficial and
$\coker H^i( G_I(M))(-1) \xrightarrow{xt} H^i( G_I(M)))$ has finite length for $i = 1,\ldots, r-1$.
Set $N = M/x M$.
Then by our construction  there exists finite length modules $U_i, V_i$ such that we have an exact sequence for $2 \leq i \leq r-1$.
\[
 U_i \rt H^{i-1}(G_I(M)/xtG_I(M)) \rt H^{i}(G_I(M))(-1) \xrightarrow{xt} H^i( G_I(M))) \rt V_i .
\]
Using dual Hilbert polynomials
it follows that for $\dim H^i(G_I(M)/ xtG_I(M)))^\vee \leq i -1$ for $1 \leq i \leq  s-1$. It remains to observe that
$$ H^i(G_I(N))  \cong  H^i(G_I(M)/ xtG_I(M))  \quad \text{ for $i \geq 1$}.$$
\end{proof}

\begin{proposition}\label{o-L-less}
Let $M$ be a \CM \ $A$-module of dimension $r \geq 1$. Assume that there is $s$ with $  1 \leq s \leq r-1$ such that for  $1 \leq i  \leq s$ we have  \\ $\dim H^i(G_I(M))^\vee \leq i -1$. Then $\dim H^i(L^I(M))^\vee
\leq i-1$ for $1 \leq i \leq s$.
\end{proposition}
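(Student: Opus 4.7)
The plan is to induct on $r = \dim M \geq 1$; the case $r = 1$ is vacuous since $s \geq 1$ forces $r \geq 2$. For the inductive step with $r \geq 2$, I would choose $x \in I$ satisfying simultaneously: (i) $x$ is $M$-superficial; (ii) $x$ lies in the Zariski-open subsets of $I/\m I$ coming from \ref{filt-reg} applied to each $H^i(L^I(M))$ and each $H^i(G_I(M))$ with $1 \leq i \leq r - 1$, so that $\coker(xt)$ has finite length on each; and (iii) when $s \geq 2$, $x$ is chosen via \ref{o-G-less} so that $\dim H^i(G_I(N))^\vee \leq i - 1$ for $1 \leq i \leq s - 1$, where $N := M/xM$ is \CM \ of dimension $r - 1$. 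Write $q_j^L(M)$ and $q_j^L(N)$ for the dual Hilbert polynomials of $H^j(L^I(M))$ and $H^j(L^I(N))$ as in \ref{growth}, so $\deg q_j^L(M) = \dim H^j(L^I(M))^\vee - 1$.

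For each $i$ in the range $2 \leq i \leq s$ (which forces $s \geq 2$), the inductive hypothesis applied to $N$ with parameter $s - 1$ yields $\dim H^{i-1}(L^I(N))^\vee \leq i - 2$, hence $\deg q_{i-1}^L(N) \leq i - 3$. Exactly as in the proof of \ref{order-L}, from \ref{long-mod} combined with the finite-length cokernels at indices $i - 1$ and $i$, I would extract the four-term exact sequence
\[
 0 \to U_i \to H^{i-1}(L^I(N)) \to H^i(L^I(M))(-1) \xrightarrow{xt} H^i(L^I(M)) \to V_i \to 0
\]
with $U_i, V_i$ of finite length. Comparing lengths in degrees $n \ll 0$ gives $q_i^L(M)(n) - q_i^L(M)(n - 1) = -q_{i-1}^L(N)(n)$, so $\deg q_i^L(M) \leq i - 2$, which translates to $\dim H^i(L^I(M))^\vee \leq i - 1$.

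The case $i = 1$ is the main obstacle: the argument above yields only that $q_1^L(M)$ is constant, not that it vanishes. To close this gap I would combine two exact sequences. From the natural \emph{first fundamental exact sequence}
\[
 0 \to G_I(M) \to L^I(M) \xrightarrow{\phi} L^I(M)(-1) \to 0,
\]
where $\phi$ is in degree $n$ the surjection $M/I^{n+1}M \twoheadrightarrow M/I^n M$, one obtains that for $n \ll 0$ the map $H^1(L^I(M))_n \to H^1(L^I(M))_{n-1}$ induced by $\phi$ is injective, since $H^0(L^I(M))$ has finite length by \ref{order-L} and $H^1(G_I(M))$ has finite length by hypothesis. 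From the second fundamental sequence \ref{dagg}, using that $H^0(L^I(N))$ has finite length (by \ref{order-L} applied to $N$) and that $\coker(xt)$ has finite length (by \ref{filt-reg}), one obtains that $xt \colon H^1(L^I(M))_{n-1} \to H^1(L^I(M))_n$ is an isomorphism for $n \ll 0$. The crucial identity, checked by unraveling the definitions, is that $\phi \circ xt$ equals multiplication by $x \in A$ as a graded endomorphism of $L^I(M)(-1)$. Passing to $H^1$, we conclude that multiplication by $x \in \m$ is injective on the finite-length $A$-module $H^1(L^I(M))_{n-1}$; since $\m$ is the unique associated prime of any nonzero finite-length $A$-module, this forces $H^1(L^I(M))_{n-1} = 0$ for $n \ll 0$, giving $\dim H^1(L^I(M))^\vee \leq 0$ and completing the induction.
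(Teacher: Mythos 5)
Your proof is correct and follows essentially the same route as the paper: for $2 \le i \le s$ both arguments pass to $N = M/xM$ for a sufficiently general superficial $x$, invoke \ref{o-G-less} to propagate the hypothesis on $G_I$ to $N$, and compare dual Hilbert polynomials across the four-term exact sequence extracted from \ref{long-mod}. The one point of divergence is the case $i=1$: the paper simply cites \cite[5.2]{Pu6}, whereas you prove it directly by factoring multiplication by $x \in \m$ on $H^1(L^I(M))(-1)$ through the first fundamental sequence $0 \to G_I(M) \to L^I(M) \to L^I(M)(-1) \to 0$ and the second fundamental sequence \ref{dagg}, then using that a nonzero finite-length module admits no injective action by an element of $\m$; this argument is sound (injectivity of $H^1(\phi)_n$ for $n \ll 0$ needs only that $H^1(G_I(M))$ has finite length, and injectivity of $xt$ in degree $n \ll 0$ needs only that $H^0(L^I(N))$ has finite length), and it is presumably a reconstruction of the content of the cited result. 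Your choice to induct on $r$ rather than on $s$ is immaterial, since both inductions reduce to the pair $(N, s-1)$.
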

\begin{proof}
We do by induction on $s$. For $s = 1$ the result follows from \cite[5.2]{Pu6}.
Assume $s \geq 2$. Using  \ref{filt-reg} and \ref{o-G-less} it follows that
there exists a non-empty Zariski-open subset $W$ of $I/\m I$ such that if $x \in I$ and $\ov{x} \in W$ we have
\begin{enumerate}
\item
$x$ is $M$-superficial \wrt \ $I$.
\item
$\dim H^i(G_I(M/ x M))^\vee \leq i -1$ for $1 \leq i \leq  s-1$.
\item
$\coker H^i( L^I(M))(-1) \xrightarrow{xt} H^i( L^I(M)))$ has finite length for $i = 1,\ldots, r-1$.
\end{enumerate}
Let $q_i^M(z)$ be the dual Hilbert-polynomial of $H^i( L^I(M))$.
 Set $N = M/xM$. By induction hypothesis we have $\dim H^i(L^I(N))^\vee \leq i -1$ for $1 \leq i \leq s -1$.
 We prove $\dim H^i(L^I(M))^\vee \leq i -1$ for $1 \leq i \leq s$.  For $i = 1$ the result follows from \cite[5.2]{Pu6}.
 Now let $2 \leq i \leq s$.  By  our construction and \ref{long-mod} there exists finite length modules $U_i, V_i$ such that we have an exact sequence
\[
0 \rt U_i \rt H^{i-1}(L^I(N)) \rt H^{i}(L^I(M))(-1) \xrightarrow{xt} H^i( L^I(M))) \rt V_i \rt 0.
\]

By our induction hypothesis it follows that $\deg (q_i^M(z) - q_i^M(z-1)) \leq i-3$. So $\deg q_i^M(z) \leq i -2$. So $\dim H^i(L^I(M))^\vee
\leq i - 1$.  The result follows.
\end{proof}
\section{Proof of Theorem \ref{main}}
In this section we prove our main result.

\s \label{complete-red} For our arguments we have to go to the completion. The Ratliff-Rush closure  and integral closure of an $\m$-primary ideal behave well \wrt \ completion. However to the best of the authors knowledge it is not known whether other coefficient ideals behave well \wrt \ completion.
Set
$$E_i^\prime(I) = \{ J \mid  J \supset \widetilde{I} \ \text{and} \ e_j(J) = e_j(I) \  \   \text{for} \  0 \leq j \leq i \}. $$
It is clear that if $J \in  E_i^\prime(I)$ then $J\widehat{A} \in  E_i^\prime(I\widehat{A})$.
\s \label{setup-thm}
Fix $i$ with $1 \leq i \leq d$. Let $J \in E_i^\prime(I)$ then note $J \subseteq \ov{I}$. So $\R(J)$ is a finite $\R(I)$-module. Set $W(J) = \R(J)/\R(I)$. Note $W(J)_n$ has finite length for all $n$. We show
\begin{proposition}\label{dim-bound}
(with setup as in \ref{setup-thm}) $\dim W(J) \leq d - i$.
\end{proposition}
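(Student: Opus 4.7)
The plan is to compute the Hilbert polynomial of $W(J)$ directly from the definition of $E_i^\prime(I)$, then convert the degree bound into a Krull dimension bound.

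First I would observe that $W(J)_0 = 0$ and for $n \geq 1$ the graded piece is $W(J)_n = J^n/I^n$, whose length is
\[
\lambda(W(J)_n) = \lambda(A/I^n) - \lambda(A/J^n).
\]
For $n \gg 0$, $\lambda(A/I^n) = P_I(n-1)$ and $\lambda(A/J^n) = P_J(n-1)$, so in that range
\[
\lambda(W(J)_n) = \sum_{j=0}^{d}(-1)^j\bigl(e_j(I) - e_j(J)\bigr)\binom{n-1+d-j}{d-j}.
\]
By the hypothesis $J \in E_i^\prime(I)$, the coefficients for $0 \leq j \leq i$ vanish. The surviving terms have $j \geq i+1$, and $\binom{n-1+d-j}{d-j}$ has degree $d-j \leq d-i-1$ in $n$. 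Hence the Hilbert polynomial $P_{W(J)}(z)$ of $W(J)$ has degree at most $d-i-1$.

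Next I would check that $W(J)$ is a genuine finitely generated graded $\mathcal{R}(I)$-module: since $J \subseteq \widebar{I}$, each generator of $J$ yields an element $at$ that satisfies an integral equation over $\mathcal{R}(I)$ (multiply a relation $a^n + c_1 a^{n-1} + \cdots + c_n = 0$, $c_j \in I^j$, by $t^n$); so $\mathcal{R}(J) = \mathcal{R}(I)[a_1 t,\ldots, a_k t]$ is integral and finitely generated as an $\mathcal{R}(I)$-algebra, hence module-finite. This is the content of the sentence "$\mathcal{R}(J)$ is a finite $\mathcal{R}(I)$-module" in the setup.

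Finally I would translate the degree bound into a dimension bound. Choose finitely many homogeneous generators $m_1,\ldots,m_s$ of $W(J)$; each $m_\ell$ lies in some $W(J)_{n_\ell}$, which has finite length as an $A$-module, so $\mathfrak{m}^{e_\ell}$ kills $m_\ell$. Since $A$ is central in $\mathcal{R}(I)$, $\mathfrak{m}^e$ with $e = \max e_\ell$ kills every $m_\ell$, and therefore all of $W(J)$. Thus $W(J)$ is a finitely generated graded module over the standard graded algebra $\mathcal{R}(I)/\mathfrak{m}^e\mathcal{R}(I)$ over the Artin local ring $A/\mathfrak{m}^e$. For such modules the usual formula $\dim W(J) = 1 + \deg P_{W(J)}$ applies, giving $\dim W(J) \leq d - i$.

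The only mildly subtle step is the last paragraph: ensuring that the Hilbert polynomial really governs the Krull dimension when the base ring $A$ is not Artin. Once one observes that the annihilator of $W(J)$ in $A$ contains a power of $\mathfrak{m}$ (because $W(J)$ is finitely generated and its graded pieces have finite length), the standard Artinian-base theory applies and the bound follows.
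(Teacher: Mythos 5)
Your proof is correct and follows essentially the same route as the paper: compute $\lambda(W(J)_n)$ as a difference of Hilbert--Samuel polynomials, use the cancellation $e_j(I)=e_j(J)$ for $0\leq j\leq i$ forced by $J\in E_i^\prime(I)$, and conclude that the Hilbert polynomial of $W(J)$ has degree at most $d-i-1$. The only difference is that you spell out why this degree bound controls $\dim W(J)$ (a power of $\m$ annihilates $W(J)$, so the standard dimension theory over an Artinian base applies), a step the paper leaves implicit.
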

\begin{proof}
Set $W = W(J)$.
From the short exact sequence \[0 \rt W(+1) \rt \bigoplus_{n \geq 0} \frac{A}{I^{n+1}} \rt \bigoplus_{n \geq 0} \frac{A}{I_i^{n+1}} \rt 0\] we have
\begin{align*}
\lambda(W_{n+1})&= \lambda(A/I^{n+1})-\lambda(A/I_i^{n+1})\\
&= \{ e_0(I) \binom{n+d}{d} + \cdots +(-1)^je_j(I)\binom{n+d-j}{d-j}+ \cdots\}\\
& \quad -\{ e_0(J) \binom{n+d}{d}+ \cdots +(-1)^je_j(J)\binom{n+d-j}{d-j}+ \cdots\}
\end{align*}
 for all $n \gg 0$. As $e_j(I)=e_j(J)$ for all $0 \leq j \leq i$ so we get \[\lambda(W_{n+1})= (-1)^{i+1}\left(e_{i+1}(I)-e_{i+1}(J)\right) \binom{n+d-i-1}{d-i-1} + \mbox{ lower terms}.\]
 Thus $n \mapsto \lambda(W_{n+1})$ is a polynomial of degree at most $d-i-1$. Hence $\dim W \leq d-i$.
\end{proof}
 Theorem \ref{main} is an easy consequence of the following result.
 \begin{theorem}\label{main-L}
 Let $(A,\m)$ be a complete \CM \ local ring of dimension $d \geq 2$ and with infinite residue field. Let $I$ be an $\m$-primary ideal of $A$.
 Fix an integer $r$ with $1 \leq r \leq d -1$. Assume $\dim H^i(L^I(A))^\vee \leq i -1$ for $1 \leq i \leq r$. Then  for
 all $n \geq 1$ and $ E_{d-r}^\prime(I^n) = \{ \widetilde{I^n} \}$.
\end{theorem}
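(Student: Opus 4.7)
The plan is to reduce to $n=1$ via the Veronese functor, then build a short exact sequence relating $W(J):=\R(J)/\R(I)$ to $L^I(A)$ and $L^J(A)$, and finally use the long exact sequence of local cohomology together with the hypothesis and Proposition~\ref{order-L} to force $W(J)$ to have finite length.

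By \ref{Veronese} we have $H^i(L^{I^n}(A)(-1)) \cong (H^i(L^I(A)(-1)))^{<n>}$; comparing graded lengths shows the two dual Hilbert polynomials have the same degree, so by \ref{growth} we get $\dim H^i(L^{I^n}(A))^\vee = \dim H^i(L^I(A))^\vee \leq i-1$. Thus the hypothesis transfers to $I^n$, and it suffices to prove $E_{d-r}^\prime(I) = \{\widetilde{I}\}$. Fix $J \in E_{d-r}^\prime(I)$ and set $W = W(J)$; by Proposition~\ref{dim-bound}, $\dim W \leq r$. Using $A[t]/\R(I) \cong L^I(A)(-1)$ and $A[t]/\R(J) \cong L^J(A)(-1)$, the inclusion $\R(I) \hookrightarrow \R(J)$ gives the short exact sequence
\[
0 \to W(+1) \to L^I(A) \to L^J(A) \to 0.
\]

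Suppose for contradiction that $s := \dim W$ satisfies $1 \leq s \leq r$. By Proposition~\ref{dim-dual}, $\dim H^s(W)^\vee = s$. The long exact sequence of $\M_{\R(I)}$-local cohomology contains
\[
H^{s-1}(L^J(A)) \to H^s(W)(+1) \to H^s(L^I(A)).
\]
Our hypothesis gives $\dim H^s(L^I(A))^\vee \leq s-1$. Applying Proposition~\ref{order-L} to the $\m$-primary ideal $J$, noting that $\R(J)$ is finite over $\R(I)$ (so local cohomology with respect to $\M_{\R(I)}$ and $\M_{\R(J)}$ agree on $\R(J)$-modules) and that $s-1 \leq d-1$, yields $\dim H^{s-1}(L^J(A))^\vee \leq s-1$. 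Since $H^s(W)(+1)$ is an extension of a submodule of $H^s(L^I(A))$ by a quotient of $H^{s-1}(L^J(A))$, Matlis duality gives $\dim H^s(W)^\vee \leq s-1$, a contradiction.

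Therefore $\dim W = 0$, i.e., $W$ has finite length, which forces $J^n = I^n$ for $n \gg 0$. Consequently $J \cdot I^n \subseteq I^{n+1}$, so $J \subseteq (I^{n+1}:I^n) \subseteq \widetilde{I}$, and combined with $\widetilde{I} \subseteq J$ from the definition of $E_{d-r}^\prime(I)$ we conclude $J = \widetilde{I}$. The main technical care lies in the compatibility of local cohomology across the Rees algebras $\R(I)$, $\R(J)$, and $\R(I^n)$, and in checking that the Veronese transfers the hypothesis cleanly; once those points are in hand the dimension count above is essentially formal.
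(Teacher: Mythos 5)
Your proposal is correct and follows essentially the same route as the paper: the same short exact sequence $0 \to W(+1) \to L^I(A) \to L^J(A) \to 0$, the same contradiction via Proposition~\ref{dim-dual} against the bound $\dim H^s(W)^\vee \leq s-1$ obtained from the hypothesis and Proposition~\ref{order-L}, and the same Veronese reduction for $n>1$. The only cosmetic difference is that you treat all values of $s$ uniformly (the paper separates $r=1$, where it invokes \ref{zero-lc} instead of \ref{order-L}), and you are slightly more explicit about the independence of local cohomology under the finite map $\R(I)\to\R(J)$, which the paper leaves implicit.
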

\begin{proof}
 We will first prove the result when $n = 1$. For the convenience of the reader we prove the case when $r = 1$. It suffices to show that
 $E_{d-1}^\prime(I) = \{ \widetilde{I} \}$. As discussed earlier we may assume that $A$ is complete.  Let $J \in E_{d-1}^\prime(I)$. Set $W = W(J) = \R(J)/\R(I)$.
 By \ref{dim-bound} we get $\dim W \leq 1$.
 We have an exact sequence
 \[
  0 \rt W(+1) \rt L^I(A) \rt L^J(A) \rt 0.
 \]
This induces a long exact sequence in cohomology
\[
\cdots \rt H^0(L^J(A)) \rt H^1(W(+1)) \rt H^1(L^I(A)) \rt \cdots.
\]
Taking Matlis duals we obtain an exact sequence
\[
 \cdots \rt H^1(L^I(A))^\vee \rt H^1(W(+1))^\vee \rt H^0(L^J(A))^\vee \rt \cdots.
\]
By our assumption and \ref{zero-lc} it follows that $\dim H^1(W(+1)) \leq 0$, As $\dim W \leq 1$
it follows from  \ref{dim-dual} that $W$ is zero-dimensional. Thus $J^m = I^m$ for all $m \gg 0$. Therefore $J \subseteq \widetilde{I}$. But by definition of $E_{d-1}^\prime(I)$
we have $J \supseteq \widetilde{I}$. Thus $ E_{d-1}^\prime(I) = \{ \widetilde{I} \}$.

Now assume $r \geq 2$. Let $J \in E_{d-r}^\prime(I)$. Set Set $W = W(J) = \R(J)/\R(I)$.
 By \ref{dim-bound} we get $\dim W \leq r$. We assert $\dim W = 0$. Suppose if possible $\dim W = c > 0$. Note $c \leq r$.
 We have an exact sequence
 \[
  0 \rt W(+1) \rt L^I(A) \rt L^J(A) \rt 0.
 \]
This induces a long exact sequence in cohomology
\[
\cdots \rt H^{c-1}(L^J(A)) \rt H^c(W(+1)) \rt H^c(L^I(A)) \rt \cdots.
\]
Taking Matlis duals we obtain an exact sequence
\[
 \cdots \rt H^c(L^I(A))^\vee \rt H^c(W(+1))^\vee \rt H^{c-1}(L^J(A))^\vee \rt \cdots.
\]
By our assumption and \ref{order-L} it follows that $\dim H^c(W(+1)) \leq c-1$. This contradicts \ref{dim-dual}. So $\dim W = 0$. Thus $J^m = I^m$ for all $m \gg 0$. Therefore $J \subseteq \widetilde{I}$. But by definition of $E_{d-r}^\prime(I)$
we have $J \supseteq \widetilde{I}$. Thus $ E_{d-r}^\prime(I) = \{ \widetilde{I} \}$.
Thus we have proved the result when $n = 1$.

Now assume that $n > 1$. We note that
$L^{I^n}(A)(-1)$ is the $n^{th}$-Veronese module of $L^I(A)(-1)$; see \ref{Veronese}. As local cohomology module commutes with Veronese it follows that
$\dim H^i(L^{I^n}(A)) \leq i - 1$ for $1 \leq i \leq r$. The result now follows from the $n = 1$ case.
\end{proof}

We now give
\begin{proof}[Proof of Theorem \ref{main}]
 It suffices to prove that $ E_{d-r}^\prime(I^n) = \{ \widetilde{I^n} \}$ for all $n \geq 1$. We note that $G(I) = G(I\widehat{A})$. By \ref{complete-red} we may assume
 that $A$ is complete. By \ref{o-L-less} it follows that $\dim H^i(L^I(A))^\vee \leq i - 1$ for $1 \leq i \leq r$. The result now follows from Theorem \ref{main-L}.
\end{proof}

\section*{Acknowledgements}
I thank Dr. Sudeshna Roy for help in typing this paper.

\end{document}